\documentclass{amsart}

\usepackage{amssymb}



\DeclareMathOperator{\dashind}{-Ind}
\DeclareMathOperator{\Prim}{Prim}
\newcommand{\Z}{\mathbb Z}
\newcommand{\N}{\mathbb N}
\newcommand{\C}{\mathbb C}
\newcommand{\h}{\widehat h}
\newcommand{\B}{\mathcal B}

\newtheorem{theorem}{Theorem}[section]
\newtheorem{lemma}[theorem]{Lemma}

\newtheorem{corollary}[theorem]{Corollary}

\theoremstyle{definition}
\newtheorem{definition}[theorem]{Definition}

\theoremstyle{remark}
\newtheorem{remark}[theorem]{Remark}

\numberwithin{equation}{section}

\begin{document}

\title{Topological freeness for  Hilbert bimodules}


\author[B. K. Kwa\'sniewski]{Bartosz  Kosma  Kwa\'sniewski}
\address{Institute of Mathematics,  University  of Bialystok\\
 ul. Akademicka 2,  PL-15-267  Bialystok,
  Poland}
 \email{bartoszk@math.uwb.edu.pl}  
\urladdr{http://math.uwb.edu.pl/~zaf/kwasniewski}
\thanks{This work was in part supported by  Polish National Science Centre  grants numbers  DEC-2011/01/D/ST1/04112, DEC-2011/01/B/ST1/03838.} 
\keywords{Hilbert bimodule, topological freeness,  crossed product}


\subjclass[2010]{Primary 46L08, Secondary 46L55}
\date{}

\dedicatory{}

\commby{}

\begin{abstract}
It is shown that topological freeness of  Rieffel's induced representation functor  implies that   any $C^*$-algebra generated by a faithful covariant representation of a Hilbert bimodule $X$ over a $C^*$-algebra $A$ is canonically isomorphic to the crossed product $A\rtimes_X \Z$.  An ideal lattice description and  a simplicity criterion for $A\rtimes_X \Z$  are  established. 
\end{abstract}
\maketitle

\section*{Introduction}
The topological freeness is a condition expressed in terms of the dual system allowing to relate the ideal structure  of  the crossed product to that of  the original algebra. In particular,  it implies that every faithful representation of the  $C^*$-dynamical system integrates to a faithful representation of the reduced crossed product.  
The idea behind this notion  probably goes back to works of W. Arveson in late 60's of XX century, and for the first time was explicitly formulated by D. P. O'Donovan,  see \cite[Thm. 1.2.1]{O'Donov}. It is closely related with the properties of the Connes spectrum \cite{Olesen-pedersen} and with proper outerness \cite{elliot},  \cite{AS93}. The role of topological freeness for $C^*$-dynamical systems with arbitrary discrete group actions  on commutative $C^*$-algebras was clarified in \cite[Thm. 4.4]{kaw-tom} and  for  arbitrary $C^*$-algebras  in \cite[Thm. 1]{AS93}. Independently, and even earlier, in connection with investigation of spectral properties of functional operators, similar results were obtained by A. B. Antonevich, A. V. Lebedev and others,  see   \cite[Cor. 12.17]{AL94}  and \cite[pp. 225, 226]{AL94} for the corresponding survey. These results were improved to cover the case of partial actions in  \cite[Thm. 2.6]{exel3} and \cite[Thm. 3.7]{top-free}.

In the present paper we  prove a statement that generalizes all the aforementioned theorems in the case $G=\Z$ and which is formulated in terms of the crossed product $A\rtimes_X \Z$, introduced in  \cite{AEE98}, of a Hilbert bimodule $X$. Thus potentially, by passing to the core $C^*$-algebra, see  \cite[Thm. 3.1]{AEE98}, it may be applied to  all the $C^*$-algebras  equipped with a semi-saturated circle action and thereby to all relative Cuntz-Pimsner algebras \cite{ms}.
 As a corollary of our main result we  provide  an ideal lattice description (in the case the dual system is free) and a simplicity criterion for the  algebras considered.

\subsubsection*{Conventions} In essence we follow the notation and conventions adopted in  \cite{AEE98}. For  maps $\gamma\colon A\times B\to C$ 
such as inner products, multiplications or representations  we denote by $\gamma(A,B)$  the closed linear span of the set
$\{\gamma(a,b)\in C : a\in A,b\in B\}$. An ideal in a $C^*$-algebra is a closed two-sided one, and $[\pi]$ stands for the unitary equivalence class of a representation $\pi$.

\section{Hilbert bimodules, their crossed products and dual partial dynamical systems}

 Let $A$ be a $C^*$-algebra and  $X$ be an   $A$-$A$-Hilbert bimodule 
 as introduced in \cite[1.8]{BMS}. More specifically, $X$ is both left and right Hilbert module over $A$ with  left and right  $A$-valued inner products $\langle x,y\rangle_L$ and  $\langle x,y\rangle_R$
satisfying the so-called imprimitivity condition:
$$
 x \cdot \langle y ,z \rangle_R = \langle x , y  \rangle_L \cdot z, \qquad \textrm{for all}\,\,\, x,y,z\in X.
 $$ 
  A simple but crucial observation is that $X$ may treated as  an imprimitivity $I_L -I_R$-bimodule where
$$
I_L=\langle X , X  \rangle_L, \qquad I_R= \langle X , X  \rangle_R
$$ 
are  ideals in $A$.  Hence  the induced representation functor $X\dashind=X\dashind_{I_R}^{I_L}$ factors through
to a homeomorphism $\h:\widehat{I_R} \to \widehat{I_L}$ between the spectra  of $I_L$ and $I_R$:
$$
\h([\pi]):= [X\dashind_{I_L}^{I_R}\pi],
$$
 cf. e.g.  \cite[Thm. 3.29, Cor. 3.32, 3.33]{morita}. 

Thus  identifying the spectra $\widehat{I_L}$ and $\widehat{I_R}$ with open subsets of the spectrum   of $A$, we may and we will treat $\h$ as a mapping between open subsets of $\widehat{A}$.
\begin{definition}
We call the partial homeomorphism $\h$ of $\widehat{A}$ described above a \emph{partial homeomorphism  dual to the Hilbert bimodule} $X$.
\end{definition}
 A \emph{covariant representation }of $(A,X)$ \cite[Defn. 2.1]{AEE98} is a pair $(\pi_A,\pi_{X})$ of representations into algebra of all bounded linear operators $\B(H)$ on a Hilbert space $H$ such that all module operations become the ones inherited form $\B(H)$, i.e. 

 $$\pi_X(ax)=\pi_A(a)\pi_X(x),\qquad \pi_X(xa)=\pi_X(x)\pi_A(a),
$$
$$
 \pi_A(\langle x ,y \rangle_R)=\pi_X(x)^*\pi_X(y),\qquad  \pi_A( \langle x , y  \rangle_L)=\pi_X(x)\pi_X(y)^*,
$$

 for all $a\in A$, $x,y \in X$.  We say that $(\pi_A,\pi_{X})$ is faithful if $\pi_A$ is faithful (then $\pi_X$ is automatically isometric).   A \emph{crossed product} of $A$ by $X$, see \cite[Defn. 2.4]{AEE98},   is the  $C^*$-algebra $A\rtimes_X \Z$ universal with respect to  covariant representations of $(A,X)$.
We denote by $\pi_A \rtimes \pi_X$ the representation of $A\rtimes_X \Z$ corresponding to  $(\pi_A,\pi_X)$ and call it an integrated form of $(\pi_A,\pi_X)$. 

The interior tensor power  $X^{\otimes n}$, $n\geq 1$, of $X$ is naturally an $A$-$A$ Hilbert bimodule which,  as follows from the lemma below,  embeds  into $A\rtimes_X \Z$.
\begin{lemma}\label{representation structure lemma}
Suppose $(\pi_A,\pi_X)$ is a covariant representation of $X$. Then for every $n\in \N$, the mapping  $$
X^{\otimes n} \ni x_1\otimes x_2 \otimes ... \otimes x_n \stackrel{\pi_{X^{\otimes n}}}{\longrightarrow} \pi_{X}(x_1)\pi_{X}(x_2)... \pi_{X}(x_n).
$$
yields a well defined linear homomorphism such that the pair $(\pi_A,\pi_{X^{\otimes n}})$ is a covariant representation of the tensor product Hilbert bimodule $X^{\otimes n}$. In particular, the linear span of the spaces
$$
\pi_{X^{\otimes n}}(X^{\otimes n}), \qquad \pi_A(A), \qquad \pi_{X^{\otimes n}}(X^{\otimes n})^*, \qquad n\in \N,
$$
forms a dense $^*$-subalgebra of $(\pi_A \rtimes \pi_X)(A\rtimes_X \Z)=C^*(\pi_A(A)\cup \pi_X(X))$.
\end{lemma}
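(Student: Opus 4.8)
The natural strategy is to build each map $\pi_{X^{\otimes n}}$ by hand on the $n$-fold algebraic tensor product of $X$ balanced over $A$, extend it continuously to the completion $X^{\otimes n}$, and then read off the covariance relations from those of $(\pi_A,\pi_X)$. First I would note that the assignment $x_1\otimes\cdots\otimes x_n\mapsto\pi_X(x_1)\cdots\pi_X(x_n)$ is compatible with the defining relations of the balanced algebraic tensor product: $\C$-multilinearity is clear, and the balancing relation $x_ia\otimes x_{i+1}=x_i\otimes ax_{i+1}$ is preserved since $\pi_X(x_ia)\pi_X(x_{i+1})=\pi_X(x_i)\pi_A(a)\pi_X(x_{i+1})=\pi_X(x_i)\pi_X(ax_{i+1})$ by covariance. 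Hence $\pi_{X^{\otimes n}}$ is a well-defined linear map on the algebraic tensor product. To extend it to $X^{\otimes n}$ I would establish boundedness: repeatedly contracting the innermost pair of factors via $\pi_X(x)^*\pi_X(y)=\pi_A(\langle x,y\rangle_R)$ and propagating the resulting element of $\pi_A(A)$ outward through the remaining factors yields, for $\xi$ in the algebraic tensor product,
\[
\pi_{X^{\otimes n}}(\xi)^*\pi_{X^{\otimes n}}(\xi)=\pi_A\bigl(\langle\xi,\xi\rangle_R\bigr),
\]
where $\langle\cdot,\cdot\rangle_R$ denotes the (recursively defined) right inner product of $X^{\otimes n}$; since $\pi_A$ is norm-decreasing, $\|\pi_{X^{\otimes n}}(\xi)\|\le\|\xi\|$, so $\pi_{X^{\otimes n}}$ extends continuously to $X^{\otimes n}$, and by polarization and continuity $\pi_A(\langle\xi,\eta\rangle_R)=\pi_{X^{\otimes n}}(\xi)^*\pi_{X^{\otimes n}}(\eta)$ for all $\xi,\eta\in X^{\otimes n}$.

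I would then verify the remaining three covariance relations for $(\pi_A,\pi_{X^{\otimes n}})$. The relations $\pi_{X^{\otimes n}}(a\xi)=\pi_A(a)\pi_{X^{\otimes n}}(\xi)$ and $\pi_{X^{\otimes n}}(\xi a)=\pi_{X^{\otimes n}}(\xi)\pi_A(a)$ follow from the corresponding ones for $(\pi_A,\pi_X)$ applied to the first, resp.\ last, leg of an elementary tensor, and pass to all of $X^{\otimes n}$ by linearity and continuity. The relation $\pi_A(\langle\xi,\eta\rangle_L)=\pi_{X^{\otimes n}}(\xi)\pi_{X^{\otimes n}}(\eta)^*$ is proved like the boundedness identity, this time contracting the \emph{outermost} pair of factors via $\pi_X(x)\pi_X(y)^*=\pi_A(\langle x,y\rangle_L)$ and matching the result against the recursive formula for the left inner product of an interior tensor product of imprimitivity bimodules; once more one argues first for elementary tensors and extends by continuity. (Alternatively, this whole step can be organized as an induction on $n$, the inductive step being the observation that whenever $(\pi_A,\pi_Y)$ and $(\pi_A,\pi_Z)$ are covariant representations of Hilbert bimodules $Y$ and $Z$ the rule $y\otimes z\mapsto\pi_Y(y)\pi_Z(z)$ defines a covariant representation of $Y\otimes_A Z$; in either presentation one also records the associativity identity $\pi_{X^{\otimes(k+l)}}(\zeta\otimes\theta)=\pi_{X^{\otimes k}}(\zeta)\pi_{X^{\otimes l}}(\theta)$.)

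For the last assertion, let $S$ be the linear span in question. It is self-adjoint by construction, and each of its elements is a finite linear combination of products of elements of $\pi_A(A)\cup\pi_X(X)$ and of their adjoints, so $S\subseteq C^*(\pi_A(A)\cup\pi_X(X))$, which equals $(\pi_A\rtimes\pi_X)(A\rtimes_X\Z)$ because $A\rtimes_X\Z$ is generated by the canonical images of $A$ and $X$. It remains to see that $S$ is closed under multiplication. Products involving a factor from $\pi_A(A)$ are absorbed by the module-action covariance relations; a product $\pi_{X^{\otimes n}}(\xi)\pi_{X^{\otimes m}}(\eta)$ equals $\pi_{X^{\otimes(n+m)}}(\xi\otimes\eta)$ by the associativity identity, and likewise with all factors starred; and a mixed product $\pi_{X^{\otimes n}}(\xi)^*\pi_{X^{\otimes m}}(\eta)$ with, say, $n\le m$: writing $\eta$ in $X^{\otimes m}\cong X^{\otimes n}\otimes_A X^{\otimes(m-n)}$ and using the associativity identity together with the $\langle\cdot,\cdot\rangle_R$-covariance relation shows it equals $\pi_{X^{\otimes(m-n)}}(\zeta)$, where $\zeta\in X^{\otimes(m-n)}$ depends norm-continuously on $(\xi,\eta)$ and is obtained by pairing the first $n$ legs of $\eta$ against $\xi$ and letting the resulting element of $A$ act on the remaining legs; dually when $n\ge m$, with $\pi_A(A)$ appearing in the case $n=m$. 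Hence $S$ is a $^*$-subalgebra, so its closure is a $C^*$-algebra containing $\pi_A(A)$ and $\pi_X(X)$ and contained in $C^*(\pi_A(A)\cup\pi_X(X))$; therefore it coincides with the latter and $S$ is dense in it.

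The content of the argument is bookkeeping rather than anything conceptual: the one place that needs genuine care is carrying out the iterated contractions of the products $\pi_X(x_n)^*\cdots\pi_X(x_1)^*\pi_X(y_1)\cdots\pi_X(y_m)$ and their left-handed counterparts so that the output is correctly identified with the recursively defined inner products and module actions on the tensor powers; the accompanying density-and-continuity extensions from elementary tensors to arbitrary elements of the completed bimodules are routine but should not be omitted.
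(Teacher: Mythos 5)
Your argument is correct, but it is genuinely different in character from what the paper does: the paper's entire proof is a citation to \cite[Lem.~2.5, 2.7]{AEE98}, where the relevant facts are established as part of the construction of the $\Z$-graded $C^*$-algebraic bundle underlying $A\rtimes_X\Z$ (whose positive fibers are exactly the tensor powers $X^{\otimes n}$). You instead rebuild everything by hand: well-definedness on the balanced algebraic tensor product, the contraction identity $\pi_{X^{\otimes n}}(\xi)^*\pi_{X^{\otimes n}}(\xi)=\pi_A(\langle\xi,\xi\rangle_R)$ giving contractivity and hence the extension to the completion, the remaining covariance relations via the recursive formulas for the module actions and the left inner product of an interior tensor product, and finally the multiplication table $\pi_{X^{\otimes n}}(\xi)\pi_{X^{\otimes m}}(\eta)=\pi_{X^{\otimes(n+m)}}(\xi\otimes\eta)$ together with the contraction of mixed products $\pi_{X^{\otimes n}}(\xi)^*\pi_{X^{\otimes m}}(\eta)$ into $\pi_{X^{\otimes(m-n)}}(X^{\otimes(m-n)})$ (resp.\ $\pi_A(A)$ when $n=m$), which shows the span is a $^*$-subalgebra whose closure contains the generators. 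All of these steps check out; the one place you rightly flag as needing care --- identifying the iterated contractions with the recursively defined inner products, and passing from elementary tensors to general elements by continuity --- is handled adequately. What your route buys is self-containedness and an explicit record of the associativity and contraction identities, which are in fact the identities used implicitly later in the paper (e.g.\ in Lemma~\ref{representation structure lemma2} and in the proof of Lemma~\ref{2.6}); what the paper's route buys is brevity and the stronger structural statement that these fibers assemble into a Fell bundle over $\Z$.
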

\begin{proof}
Apply for instance \cite[Lem. 2.7]{AEE98}, see also \cite[Lem. 2.5]{AEE98}.
\end{proof}

The next lemma shows  how to iterate representation $[\pi] \in \widehat{A}$ under  $\h$  using representations of   $A\rtimes_X \Z$. Roughly, in order to determine $\h^n([\pi])$  it suffices to  extend $\pi$ (say, acting   in a Hilbert space $K$) to any representation $\nu:A\rtimes_X \Z\to \B(H)$, $K\subset H$, and then determine representation $\nu|_A$ of $A$ acting in the subspace  $\nu(X^{\otimes n})K$. 

\begin{lemma}\label{representation structure lemma2}
Suppose $(\pi_A,\pi_X)$ is covariant representation of $X$ in a Hilbert space $H$ and let $\pi$ be an irreducible summand of $\pi_A$ acting on a Hilbert subspace $K$. The representation $\pi_n:A\to \B(\pi_{X^{\otimes n}}(X^{\otimes n})K)$ where 
$$
\pi_n(a):= \pi_A(a)|_{\pi_{X^{\otimes n}}(X^{\otimes n})K}, \qquad a\in A,
 $$ 
 is non-zero if and only if $[\pi]$ belongs to the domain of $\h^n$, and then 
 $$
 [\pi_n]= \h^n([\pi]).
 $$
 \end{lemma}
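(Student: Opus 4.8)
The plan is to reduce the statement to the well-known description of Rieffel induction in terms of the interior tensor product. First I would observe that $\pi_{X^{\otimes n}}(X^{\otimes n})K$ is indeed a $\pi_A(A)$-invariant subspace of $H$: by covariance $\pi_A(a)\pi_{X^{\otimes n}}(\xi) = \pi_{X^{\otimes n}}(a\cdot \xi)$ for $\xi \in X^{\otimes n}$, and $a\cdot \xi$ still lies in $X^{\otimes n}$, so $\pi_n$ is a well-defined representation of $A$ on that subspace. The key computation is that this subspace, together with the action $\pi_n$, is unitarily equivalent to the Rieffel-induced module $X^{\otimes n}\text{-}\mathrm{Ind}\,\pi$. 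Concretely, one forms the algebraic tensor product $X^{\otimes n}\odot_A K$ with the pre-inner product $\langle \xi\otimes k, \eta\otimes k'\rangle = \langle k, \pi(\langle \xi,\eta\rangle_R)k'\rangle$, completes and separates to get the Hilbert space on which $X^{\otimes n}\text{-}\mathrm{Ind}\,\pi$ acts by $a\cdot(\xi\otimes k) = (a\xi)\otimes k$. The map $\xi\otimes k \mapsto \pi_{X^{\otimes n}}(\xi)k$ is isometric for this pre-inner product precisely because covariance gives $\pi_A(\langle \xi,\eta\rangle_R) = \pi_{X^{\otimes n}}(\xi)^*\pi_{X^{\otimes n}}(\eta)$ (using Lemma~\ref{representation structure lemma}, which identifies the inner product on $X^{\otimes n}$ with the one inherited from $\B(H)$), hence it descends to a unitary from the completed induced module onto $\overline{\pi_{X^{\otimes n}}(X^{\otimes n})K}$ intertwining $X^{\otimes n}\text{-}\mathrm{Ind}\,\pi$ with $\pi_n$.

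Next I would handle the non-degeneracy/non-vanishing dichotomy. The representation $\pi_n$ is non-zero if and only if $\pi_{X^{\otimes n}}(X^{\otimes n})K \neq \{0\}$, which by the isometry above happens if and only if the induced module $X^{\otimes n}\text{-}\mathrm{Ind}\,\pi$ is non-zero. Since $X^{\otimes n}$ is an imprimitivity $I_L^{(n)}$-$I_R^{(n)}$-bimodule for appropriate ideals, and Rieffel induction from an irreducible representation produces a non-zero (indeed irreducible) representation exactly when $\pi$ does not annihilate the right-hand ideal $\langle X^{\otimes n},X^{\otimes n}\rangle_R$, this is the statement that $[\pi]$, viewed in $\widehat{A}$, lies in $\widehat{\langle X^{\otimes n},X^{\otimes n}\rangle_R}$. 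I would then identify this open set with the domain of $\h^n$: by the discussion preceding the Definition, $\h$ is the partial homeomorphism of $\widehat A$ induced by $X$ as an $I_L$-$I_R$-imprimitivity bimodule, with domain $\widehat{I_R}$; iterating, the composition $\h^n$ has domain exactly $\widehat{\langle X^{\otimes n},X^{\otimes n}\rangle_R}$, because the interior tensor product of imprimitivity bimodules corresponds to composition of the induced homeomorphisms (this is the functoriality of Rieffel induction, e.g. \cite[Thm.~3.29]{morita}).

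Finally, on the domain of $\h^n$, the identity $[\pi_n] = \h^n([\pi])$ follows: $[\pi_n] = [X^{\otimes n}\text{-}\mathrm{Ind}\,\pi]$ by the unitary equivalence of the first paragraph, and $[X^{\otimes n}\text{-}\mathrm{Ind}\,\pi] = \h^n([\pi])$ because $X^{\otimes n}$ as an imprimitivity bimodule induces the $n$-fold composite of $\h$ — again using that $X^{\otimes n} \cong X\otimes_A \cdots \otimes_A X$ and that induction through a tensor product is the composite of inductions. I expect the main obstacle to be the bookkeeping around degeneracy: making sure that the ideals $\langle X^{\otimes n},X^{\otimes n}\rangle_R$ and $\langle X^{\otimes n},X^{\otimes n}\rangle_L$ nest correctly so that the domain of $\h^n$ (as a composite of partial maps) really is $\widehat{\langle X^{\otimes n},X^{\otimes n}\rangle_R}$, and that an irreducible $\pi$ not annihilating this ideal induces an \emph{irreducible} representation (so that $[\pi_n]$ is a legitimate point of $\widehat A$ and the equivalence class statement makes sense). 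This is where the imprimitivity (Morita) structure, rather than mere Hilbert-module structure, is essential, and where I would lean on \cite[Cor.~3.32, 3.33]{morita}.
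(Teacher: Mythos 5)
Your proposal is correct and follows essentially the same route as the paper: both identify $\pi_n$ with the Rieffel-induced representation $X^{\otimes n}\dashind(\pi)$ via the unitary $\pi_{X^{\otimes n}}(\xi)k \leftrightarrow \xi\otimes_\pi k$, whose isometry comes from the covariance relation $\pi_A(\langle\xi,\eta\rangle_R)=\pi_{X^{\otimes n}}(\xi)^*\pi_{X^{\otimes n}}(\eta)$. Your treatment of the non-vanishing dichotomy and the identification of the domain of $\h^n$ with $\widehat{\langle X^{\otimes n},X^{\otimes n}\rangle_R}$ is somewhat more explicit than the paper's, which simply asserts $[X^{\otimes n}\dashind(\pi)]=\h^n([\pi])$, but the substance is the same.
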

\begin{proof}
We recall  that  $X\dashind(\pi)(a)  (x\otimes_\pi h) = (a x)\otimes_\pi h$ where $X\otimes_\pi H$ is the tensor product Hilbert space with the inner product satisfying
$\langle x_1\otimes_\pi h_1, x_2\otimes_\pi h_2 \rangle = \langle h_1,\pi(\langle x_1, x_2 \rangle_{R})h_2\rangle$. In particular, one sees that $[X^{\otimes n}\dashind(\pi)]=\h^n([\pi])$. Since  $(\pi_A,\pi_{X^{\otimes n}})$ is a covariant representation of $X^{\otimes n}$ we have
$$
\|\pi_{X^{\otimes n}}(x)h\|^2=\langle \pi_{X^{\otimes n}}(x)h,\pi_{X^{\otimes n}}(x)h \rangle =\langle h,\pi_A(\langle x, x\rangle_R) h \rangle= \|x\otimes_{\pi} h\|^2.
$$
Accordingly, the mapping      $\pi_{X^{\otimes n}}(x)h \mapsto x \otimes_\pi h$, $x\in X^{\otimes n}$, $h\in K$, extends by linearity and continuity to a unitary  operator $V:\pi_{X^{\otimes n}}(X^{\otimes n})K \to X^{\otimes n} \otimes_\pi K$, which  intertwines $\pi_n$ and $X^{\otimes n}\dashind(\pi)$ because 
$$
V\pi_n(a)\pi_{X^{\otimes n}}(x)h =V \pi_{X^{\otimes n}}(ax) h= (ax \otimes_\pi h)= {\h}^n(\pi)(a)  V \pi_{X^{\otimes n}}(x)h.
$$
\end{proof}
\section{The main result  and its corollaries}
 The goal of the paper could be stated as follows.

\begin{definition}\label{top-free} We say that  a partial homeomorphism $\varphi$ of a topological space, i.e. a homeomorphism between open subsets,   is   {\em
topologically free} if  for any $n>0$ the set $F_n=\{x : \varphi^n(x)=x\}$ (contained in the domain of $\varphi^n$) has empty interior.
\end{definition}

\begin{theorem}\label{main result}
If the partial homeomorphism $\h$ is  topologically
free, then    every faithful  covariant representation  $(\pi_A,\pi_X)$   of  $X$ integrates to
faithful representation $(\pi_A \times \pi_X)$ of $A\rtimes_X \Z$.
\end{theorem}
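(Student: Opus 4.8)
The strategy is the classical one for topological freeness: since $\pi_A$ is faithful, $\pi_X$ is automatically isometric (by the covariance relations $\pi_A(\langle x,x\rangle_R)=\pi_X(x)^*\pi_X(x)$), so $\pi_A\rtimes\pi_X$ is injective on the ``diagonal'' copy of $A$ and on each wedge space $\pi_{X^{\otimes n}}(X^{\otimes n})$. The point is to promote this to injectivity on all of $A\rtimes_X\Z$. I would exploit the $\Z$-grading of the crossed product: there is a faithful conditional expectation $E\colon A\rtimes_X\Z\to A$ onto the $0$-graded part, and it suffices to show that the induced expectation $E_\nu$ on $C^*(\pi_A(A)\cup\pi_X(X))$ (defined via averaging over the dual circle action on the target, using Lemma~\ref{representation structure lemma}) remains faithful; then a standard argument gives that $\ker(\pi_A\rtimes\pi_X)$ is trivial because $E\big((\pi_A\rtimes\pi_X)^*(\pi_A\rtimes\pi_X)(a)\big)=0$ forces $a=0$.

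First I would reduce to the following local statement: for every $a$ in the dense $^*$-subalgebra of Lemma~\ref{representation structure lemma}, written as a finite sum $a = a_0 + \sum_{n\ge 1}\big(\pi_{X^{\otimes n}}(\xi_n) + \pi_{X^{\otimes n}}(\eta_n)^*\big)$ with $a_0\in\pi_A(A)$, and for every $\varepsilon>0$, there is a positive element (or a suitable ``cut-down'' $h$ of norm one, built from an approximate unit localised near an appropriate point of $\widehat A$) such that $\|h a_0 h\| \ge \|a_0\|-\varepsilon$ while $\|h\,\pi_{X^{\otimes n}}(\xi_n)\,h\|$ and $\|h\,\pi_{X^{\otimes n}}(\eta_n)^*\,h\|$ are all small. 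Here is where topological freeness enters: the off-diagonal term $\pi_{X^{\otimes n}}(\xi_n)$ shifts the spectral subspace, and by Lemma~\ref{representation structure lemma2} its ``support'' under compression to an irreducible summand $[\pi]$ lives over the pair $([\pi],\h^n([\pi]))$; choosing $[\pi]$ close to a point where $\|a_0\|$ is nearly attained and, crucially, with $[\pi]$ \emph{not} fixed by $\h^n$ (possible since $F_n$ has empty interior, so such points are dense), one can separate $[\pi]$ from $\h^n([\pi])$ by disjoint open sets and pick $h$ supported in a neighbourhood small enough that $h\cdot(\text{shift})\cdot h$ is essentially zero for every relevant $n$ simultaneously (finitely many).

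Carrying this out concretely: pick an irreducible representation $\pi$ with $\|\pi(a_0)\|$ within $\varepsilon$ of $\|a_0\|=\|E((\pi_A\rtimes\pi_X)^*(\pi_A\rtimes\pi_X)(a))\|$ and with $[\pi]\notin F_n$ for all $n\le N$ (the finitely many $n$ occurring in $a$); using that $\h$ is a partial homeomorphism of $\widehat A$ and Urysohn-type functions in $A$, select $h\in A$, $0\le h\le 1$, with $h\pi$-supported in a tiny neighbourhood $U$ of $[\pi]$ chosen so that $U$, $\h(U),\dots,\h^N(U)$ are pairwise disjoint (shrinking $U$ if necessary). Then $\pi_A(h)\,\pi_{X^{\otimes n}}(\xi_n)\,\pi_A(h)$ has norm $o(1)$ by Lemma~\ref{representation structure lemma2} applied to the irreducible summands in the support of $h$, whereas $\|\pi_A(h)\,a_0\,\pi_A(h)\|\ge\|a_0\|-2\varepsilon$. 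This yields $\|a\|\ge\|\pi_A(h)\,a\,\pi_A(h)\|\ge\|a_0\|-3\varepsilon$ on the image side, and letting $\varepsilon\to0$ gives $\|(\pi_A\rtimes\pi_X)(a)\|\ge\|E((\pi_A\rtimes\pi_X)^*(\pi_A\rtimes\pi_X)(a))\|^{1/2}$ for all $a$ in the dense subalgebra, hence for all $a$; faithfulness of $E$ then forces $\ker(\pi_A\rtimes\pi_X)=0$.

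The main obstacle I anticipate is the simultaneous, $n$-uniform choice of the cut-down $h$: one must control all off-diagonal shifts $1,2,\dots,N$ at once, which requires that the neighbourhood $U$ can be taken so small that $U,\h(U),\dots,\h^N(U)$ are mutually disjoint — this is exactly where one needs more than ``$F_N$ has empty interior for a single $N$'', namely that each $F_n$ ($1\le n\le N$) has empty interior and a little point-set topology to combine them (a density/Baire argument to find a common good point, then shrink). A secondary technical care point is that $\widehat A$ need not be Hausdorff, so ``disjoint open sets'' must be handled via the ideal/open-set correspondence and positive elements of $A$ rather than continuous functions, and the compression estimates must be phrased C$^*$-algebraically (via $\pi_A(h) a \pi_A(h)$ and the structure of $\pi_{X^{\otimes n}}(X^{\otimes n})$ as a Hilbert bimodule) rather than pointwise on a spectrum.
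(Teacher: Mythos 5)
Your overall strategy --- reduce faithfulness of $\pi_A\rtimes\pi_X$ to the existence of the conditional expectation onto $\pi_A(A)$, and then establish the key estimate (compression of $b$ has norm at least $\|a_0\|-\varepsilon$) by choosing an irreducible $[\pi]$ at which $\|\pi(a_0)\|$ nearly attains $\|a_0\|$ and which is not periodic for the finitely many powers of $\h$ involved --- is the same as the paper's. (Incidentally, the simultaneous control of all $n\leq N$ that you worry about is handled in the paper without any Baire argument: by injectivity of $\h$ one has $F_1\cup\dots\cup F_N\subseteq F_{N!}$, and a single application of topological freeness to $F_{N!}$ suffices.)

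The genuine gap is in the mechanism you propose for killing the off-diagonal terms. You cut down by a positive element $h\in A$ supported in a neighbourhood $U$ of $[\pi]$ chosen so that $U,\h(U),\dots,\h^N(U)$ are pairwise disjoint. But $\widehat{A}$ is in general not Hausdorff, and two distinct points of $\widehat{A}$ --- here $[\pi]$ and $\h^k([\pi])$ --- need not be separated by disjoint open sets at all; topological freeness gives no separation, only that the fixed-point sets have empty interior. You flag non-Hausdorffness as a ``secondary technical care point'' to be handled ``via the ideal/open-set correspondence'', but recasting open sets as ideals does not produce the disjoint neighbourhoods that your estimate $\|h\,\pi_{X^{\otimes n}}(\xi_n)\,h\|=o(1)$ requires: if every neighbourhood $U$ of $[\pi]$ meets $\h^n(U)$, there is no reason for the cut-down to be small. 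This is exactly the obstruction that Lemma~\ref{a} in the paper is designed to circumvent. Instead of localizing in the spectrum by an element of $A$, one fixes an arbitrary extension $\nu$ of $\pi$ to $C^*(A,X)$ and compresses by the projection $P_\pi$ onto the irreducible subspace $H_\pi$; the projection $P_2$ onto $\nu(X_k)H_\pi$ also lies in $\nu(A)'$ and carries the irreducible representation $\h^k([\pi])$ by Lemma~\ref{representation structure lemma2}, and Lemma~\ref{a} asserts that two projections in the commutant whose compressions are irreducible are either equal or orthogonal. Thus the mere inequivalence $[\pi]\neq\h^k([\pi])$ --- with no topological separation whatsoever --- forces $P_\pi P_2=0$ and hence $P_\pi\nu(a_k)P_\pi=0$ exactly, not merely approximately (the case where $[\pi]$ lies outside $\widehat{D}_k\cap\widehat{D}_{-k}$ being handled separately by a Hewitt--Cohen factorization). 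Your argument is essentially workable when $\widehat{A}$ is Hausdorff (e.g.\ for commutative $A$, which is where this style of proof originates), but in the stated generality the separation step fails and must be replaced by an argument of this Hilbert-space, rather than spectral, nature.
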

\begin{remark}\label{remark about domains}
The map $\h$ is a lift of the partial homeomorphism  $h:\Prim (I_R)\to \Prim(I_L)$ of  $\Prim(A)$ where  $h(\ker \pi):= \ker X\dashind(\pi)$, $[\pi] \in\widehat{A}$. Actually $h$ is the restriction of the so-called Rieffel isomorphism between the ideal lattices of $I_R$ and  $I_L$ where 
\begin{equation}\label{Rieffel homoemorphism}
  h(J)= \langle X J , X  \rangle_L , \qquad  h^{-1}(K)=\langle X, KX  \rangle_R,
\end{equation}
cf. \cite[3.3]{morita}.
Plainly, topological freeness of the Rieffel homeomorphism  $h$, treated as a partial homeomorphism of $\Prim(A)$ implies the topological freeness of $\h$. However, the converse implication is not true.
\end{remark}

An   equivalent form of  Theorem \ref{main result} states that   if the partial homeomorphism $\h$  is  topologically
free, then every non-trivial ideal in  $A\rtimes_X \Z$ leaves an "imprint" in $A$ -- has a non-trivial intersection with $A$. By specifying these imprints one may determine the  ideal structure of $A\rtimes_X \Z$. To this end we adopt the following definition, cf. \cite[Defn. 2.7, 2.8]{exel3}.
\begin{definition}\label{minim} We say that  a set $V$ is \emph{invariant} under a partial homeomorphism $\varphi$ with a domain $\Delta$ if 
$$
\varphi(V\cap \Delta)= V\cap \varphi(\Delta). 
$$
If there are no non-trivial closed invariant sets, then   $\varphi$   is  called  {\em
minimal}, and  $\varphi$  is said to be \emph{free}, if it is topologically free on every closed invariant set (in the  Hausdorff space case   this amounts to requiring that $\varphi$ has no periodic points).
\end{definition}
Similarly to topological freeness, cf. Remark \ref{remark about domains}, the freeness of $h$ is a stronger condition than freeness of $\h$. However,  the minimality of $\h$ and $h$ are equivalent, and moreover using \eqref{Rieffel homoemorphism} one sees that, if $I$ is an ideal in $A$, then the open set $\widehat{I}$  in $\widehat{A}$ is $\h$-invariant if and only if 
 \begin{equation}\label{invariant ideals condition}
IX=XI.
\end{equation}
Ideals satisfying \eqref{invariant ideals condition} are called $X$-invariant in   \cite{katsura2}, and  $X$-invariant and saturated in \cite{Kwa}.
It is known, see \cite[10.6]{katsura2} or \cite[Thm. 7.11]{Kwa}, that 
the map 
\begin{equation}\label{Rieffel circle action}
A\rtimes_X \Z \triangleright J \longmapsto  J \cap A \triangleleft A
\end{equation}
defines a homomorphism from the  lattice of ideals in $A\rtimes_X \Z$  onto the lattice of ideals  satisfying \eqref{invariant ideals condition}.  When restricted to \emph{gauge invariant ideals}, i.e. ideals preserved under the \emph{gauge circle action} $\{\lambda\in  \C:|\lambda|=1\}\ni  \lambda \to \gamma_\lambda \in Aut(A\rtimes_X \Z)$   given  by
\begin{equation}\label{circle action}
 \gamma_\lambda(a)= a,\quad a\in A,\qquad  \gamma_\lambda(x)= \lambda x, \quad x\in X,
\end{equation}
 homomorphism  \eqref{Rieffel circle action} is actually an isomorphism. Thus if one is able to show that all ideals in $A\rtimes_X \Z$ are gauge invariant, one obtains a complete description of the ideal structure of $A\rtimes_X \Z$.

\begin{theorem}[ideal lattice description]\label{main result3}
If  the partial homeomorphism $\h$ is free, then  all ideals in $A\rtimes_X \Z$ are gauge invariant and the map
\begin{equation}\label{lattice isomorphism}
J \mapsto  \widehat{J \cap A}
\end{equation}
is a lattice isomorphism between ideals in $A\rtimes_X \Z$ and  open invariant sets in $\widehat{A}$. 

\end{theorem}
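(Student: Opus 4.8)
The plan is to deduce Theorem~\ref{main result3} from Theorem~\ref{main result} together with the structural facts collected just before its statement. The key point to establish is that freeness of $\h$ forces every ideal $J\triangleleft A\rtimes_X\Z$ to be gauge invariant; once this is known, the restriction of the surjection \eqref{Rieffel circle action} to gauge invariant ideals is the claimed isomorphism, and since $\h$-invariant open subsets of $\widehat A$ correspond bijectively to ideals satisfying \eqref{invariant ideals condition} (via $I\mapsto\widehat I$ and the identity \eqref{invariant ideals condition}$\Leftrightarrow$$\widehat I$ is $\h$-invariant), composing gives the lattice isomorphism \eqref{lattice isomorphism}. Lattice-theoretic compatibility (that the bijection respects intersections and the join operation) is then routine, since $J\mapsto J\cap A$ manifestly preserves intersections and the inverse sends a sum of invariant ideals to the ideal generated by the corresponding union of open sets.

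First I would fix an ideal $J\triangleleft A\rtimes_X\Z$ and set $I:=J\cap A$, an ideal of $A$ satisfying \eqref{invariant ideals condition}; write $\gamma_\lambda$ for the gauge action from \eqref{circle action} and let $J_\gamma:=\bigcap_{|\lambda|=1}\gamma_\lambda(J)$ be the largest gauge invariant ideal contained in $J$. Since $A$ is fixed pointwise by every $\gamma_\lambda$, we have $J_\gamma\cap A = J\cap A = I$. The strategy is to pass to the quotient $B:=(A\rtimes_X\Z)/J_\gamma$. Because $J_\gamma$ is gauge invariant, the gauge action descends to $B$, and by the gauge invariant uniqueness / isomorphism statement recalled before Theorem~\ref{main result3} (the isomorphism between gauge invariant ideals and $X$-invariant saturated ideals), $B$ is canonically isomorphic to $(A/I)\rtimes_{X/IX}\Z$, where $X/IX$ is the quotient Hilbert bimodule over $A/I$, with the image of $A$ in $B$ being a faithful copy of $A/I$. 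Then $J/J_\gamma$ is an ideal of $B=(A/I)\rtimes_{X/IX}\Z$ having zero intersection with the copy of $A/I$.

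Now I would invoke Theorem~\ref{main result} for the bimodule $X/IX$ over $A/I$. Concretely, if $J/J_\gamma$ were non-trivial, the quotient map $B\to B/(J/J_\gamma)$ would restrict to a faithful representation of $A/I$ and hence (being covariant for $X/IX$) integrate to a covariant representation of $X/IX$ that is faithful on the coefficient algebra but \emph{not} faithful on the crossed product, contradicting Theorem~\ref{main result} --- provided the dual partial homeomorphism $\widehat{h}_{X/IX}$ of $\widehat{A/I}$ is topologically free. This last provision is exactly where freeness (rather than mere topological freeness) of $\h$ is used: identifying $\widehat{A/I}$ with the closed $\h$-invariant subset $\widehat A\setminus\widehat I$ of $\widehat A$, one checks that $\widehat{h}_{X/IX}$ is the restriction of $\h$ to this closed invariant set (this is a direct computation with the induced-representation description of $\h$ and the formula \eqref{Rieffel homoemorphism}, passing to quotients). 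By definition of freeness, $\h$ restricted to any closed invariant set is topologically free, so the hypothesis of Theorem~\ref{main result} is met. Therefore $J/J_\gamma=0$, i.e. $J=J_\gamma$ is gauge invariant, as desired.

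The main obstacle I anticipate is the bookkeeping in the identification $B\cong (A/I)\rtimes_{X/IX}\Z$ and, within it, the verification that the dual partial homeomorphism of the quotient bimodule is genuinely the restriction of $\h$ to the closed invariant set $\widehat A\setminus\widehat I$ --- one must be careful about how the ideals $I_L,I_R$ and the Rieffel homeomorphism behave under passage to $X/IX$, and that the quotient of an imprimitivity bimodule by an invariant ideal is again an imprimitivity bimodule over the quotient (so that Theorem~\ref{main result} applies verbatim). Once these identifications are in place, the rest is a formal argument combining Theorem~\ref{main result} with the already-cited surjectivity and gauge invariant isomorphism results. I would also remark that the hypothesis cannot be weakened to topological freeness of $\h$: topological freeness is not inherited by closed invariant subsets in general, which is precisely the gap that the stronger notion of freeness is designed to fill.
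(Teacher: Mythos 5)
Your proposal is correct and follows essentially the same route as the paper: both quotient by a gauge-invariant ideal contained in $J$ having the same intersection with $A$ (you take $J_\gamma=\bigcap_{|\lambda|=1}\gamma_\lambda(J)$, the paper takes $\langle J\cap A\rangle=\ker\Psi$; these coincide), identify the quotient with the crossed product of the quotient bimodule, note that the image of $J$ meets the copy of $A/I$ trivially, and apply Theorem \ref{main result} to the restricted dual system, which is topologically free precisely because $\h$ is free on the closed invariant set $\widehat A\setminus\widehat I$. The only step you leave implicit is the one-line verification that $(J/J_\gamma)\cap(A/I)=\{0\}$ (if $q(a)=q(a_1)$ with $a\in J$, $a_1\in A$, then $a_1\in J\cap A=I$), which the paper spells out.
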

\begin{proof}
It suffices to show  that  the map \eqref{lattice isomorphism} is injective. To this end suppose  that  $J$ is an ideal in $A\rtimes_X \Z$, let $J_0=J\cap A$ and denote by $\langle J_0 \rangle$  the ideal generated by $J_0$ in $A\rtimes_X \Z$.  Clearly, $\langle J_0 \rangle\subset J$  and  to  prove that  $\langle J_0 \rangle= J$ we consider the homomorphism
$$
\Psi: A\rtimes_X \Z\to  A/J_0\rtimes_{X/XJ_0} \Z
$$
arising from the composition of the quotient maps and the  universal covariant representation of $(A/J_0,X/XJ_0)$. Then, cf. for instance \cite[Thm. 6.20]{Kwa}, $\ker \Psi=\langle J_0 \rangle$ and we claim that $\Psi(J)\cap   A/J_0=\{0\}$. Indeed, if $b\in \Psi(J)\cap   A/J_0$,   then $b = \Psi(a)$ for some $a \in J$ and
$b =\Psi(a_1)$ for some $a_1\in A$. Thus $a-a_1 \in \ker\Psi=\langle J_0 \rangle\subset J$ and it follows
that $a_1$ itself is in J. But then $a_1\in  J \cap A = J_0$, so $b =\Psi(a_1) = 0$, which proves our claim.
The system dual to $(A/J_0, X/XJ_0)$ naturally identifies with $(\widehat{A}\setminus \widehat{J_0},\h)$ and thus  it is topologically free by freeness of $(\widehat{A},\h)$. Hence  Theorem \ref{main result} implies that $\Psi(J)$ is trivial in $A/J_0\rtimes_{X/XJ_0} \Z$. Hence $J=\langle J_0 \rangle= \ker\Psi$.
\end{proof}
\begin{corollary}[simplicity criterion]\label{main result2}
If the partial homeomorphism $\h$ is  topologically
free and minimal, then $A\rtimes_X \Z$ is simple.
\end{corollary}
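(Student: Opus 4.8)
The plan is to prove that $A\rtimes_X \Z$ has no ideals other than $0$ and itself; since $A$ embeds into $A\rtimes_X \Z$ (so the algebra is non-zero whenever $A$ is), this yields simplicity. Let $J$ be a non-zero ideal of $A\rtimes_X \Z$ and set $J_0:=J\cap A$. The first ingredient is Theorem \ref{main result}: in the equivalent ``imprint'' form recorded just after its statement, topological freeness of $\h$ forces $J_0\neq 0$. The remaining task is to identify $J_0$, and this is where minimality of $\h$ enters.

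Next I would note that $J_0$ is automatically $X$-invariant, i.e. $J_0X=XJ_0$, because the restriction map \eqref{Rieffel circle action} sends the ideal lattice of $A\rtimes_X \Z$ into the lattice of ideals obeying \eqref{invariant ideals condition}. Hence the non-empty open set $\widehat{J_0}\subseteq\widehat A$ is $\h$-invariant in the sense of Definition \ref{minim}, and so is its complement $\widehat A\setminus\widehat{J_0}$: writing $\Delta$ for the domain of $\h$, the equality $\h(\widehat{J_0}\cap\Delta)=\widehat{J_0}\cap\h(\Delta)$ passes to complements inside $\Delta$ and $\h(\Delta)$ because $\h|_\Delta$ is a bijection onto $\h(\Delta)$. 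Thus $\widehat A\setminus\widehat{J_0}$ is a closed $\h$-invariant set different from $\widehat A$ (as $\widehat{J_0}\neq\emptyset$), so minimality forces it to be empty. Therefore $\widehat{J_0}=\widehat A$, equivalently $J_0=A$, i.e. $A\subseteq J$.

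Finally I would promote $A\subseteq J$ to $X\subseteq J$, after which $J\supseteq C^*(A\cup X)=A\rtimes_X \Z$ by Lemma \ref{representation structure lemma} and we are done. This is the only point where a genuine, if tiny, argument is needed: for $x\in X\subseteq A\rtimes_X \Z$ one has $x^*x=\langle x,x\rangle_R\in I_R\subseteq A\subseteq J$, and since $J$ is a closed two-sided ideal, $x^*x\in J$ already implies $x\in J$ (pass to $A\rtimes_X \Z\to(A\rtimes_X \Z)/J$ and use that $b^*b=0$ forces $b=0$ in a $C^*$-algebra); alternatively, $X$ is a full right Hilbert $I_R$-module, whence $X=\overline{XI_R}\subseteq\overline{(A\rtimes_X \Z)\,J}\subseteq J$. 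Either way $J=A\rtimes_X \Z$.

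I do not anticipate any real obstacle; the subtlest points are the complementation step for invariant sets and the elementary remark that an ideal containing $A$ must contain $X$. A more economical variant bypasses the last paragraph entirely: minimality makes $\emptyset$ and $\widehat A$ the only closed $\h$-invariant subsets, and on them topological freeness holds vacuously, respectively by hypothesis, so $\h$ is \emph{free}; Theorem \ref{main result3} then identifies the ideal lattice of $A\rtimes_X \Z$ with the two-element lattice of open $\h$-invariant subsets of $\widehat A$, which is exactly simplicity.
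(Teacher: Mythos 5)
Your proposal is correct. The paper states the corollary without an explicit proof, and the intended derivation is precisely your closing ``economical variant'': topological freeness plus minimality imply freeness (the only closed invariant sets being $\emptyset$ and $\widehat A$, on which topological freeness holds vacuously, respectively by hypothesis), so Theorem \ref{main result3} applies and the lattice of open invariant sets reduces to the two-element lattice. Your main argument takes a genuinely more self-contained route: it bypasses Theorem \ref{main result3} and instead combines the ``imprint'' form of Theorem \ref{main result} (to get $J\cap A\neq 0$), the lattice homomorphism \eqref{Rieffel circle action} (to see that $\widehat{J\cap A}$ is $\h$-invariant, hence by minimality all of $\widehat A$), and the elementary observations that invariance passes to complements and that an ideal containing $A$ contains $X$ (via $x^*x=\langle x,x\rangle_R$) and hence all of $C^*(A\cup X)=A\rtimes_X\Z$. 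What the direct route buys is independence from the quotient-bimodule machinery $(A/J_0, X/XJ_0)$ used in the proof of Theorem \ref{main result3}; what the paper's route buys is brevity, since the corollary then requires no argument at all. All steps in both of your versions check out.
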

\section{Proof of Theorem \ref{main result}} 

 We denote by $X_n$, $n\in \Z$,  the fibers of the $\Z$-bundle constructed in \cite[p. 3046-3047]{AEE98}. In particular, $X_{0}=A$  and  for $n>0$, $X_n=X^{\otimes n}$   and  $X_{-n}=\widetilde{X}^{\otimes |n|}$, where  $\widetilde{X}$ is the Hilbert  $I_R -I_L$-bimodule   dual to the $I_L -I_R$-bimodule $X$.  The $C^*$-algebraic bundle operations  equip   each pair $(A,X_n)$, $n\in \Z$,   with a Hilbert bimodule structure. 
For all  $n\in \Z$ 
we  put
$$
D_n:={\langle X_n, X_n \rangle}_R=  X_n^*\cdot  X_n
$$
where ``$\cdot$'' is the $C^*$-algebraic bundle multiplication (then, in particular, $D_0=A$ and $D_{-n} =   {\langle X_n, X_n \rangle}_L$). We notice that  $X_n$ is a $D_{-n}- D_{n}$-imprimitivity bimodule and the partial homeomorphism  of $\widehat{A}$ given by the induced representation functor  $X_n\dashind^{D_{-n}}_{D_n}$ coincides with the $n$-th power $\h^n$ of $\h$, and in particular, $\widehat{D}_{n}$ is a natural domain of $\h^n$. 

A covariant representation $(\pi_A,\pi_{X})$ of $(A,X)$   yields   covariant representations $(\pi_A,\pi_{X_n})$ of $(A,X_{n})$ for all $n\in \Z$, cf.  Lemma \ref{representation structure lemma},  \cite[Lem. 2.7]{AEE98} or \cite[Thm. 3.12]{Kwa}.  The copy of $A$ embedded into $A\rtimes \Z$ is a fixed point algebra for the gauge circle action \eqref{circle action}. Thus, in the language of circle actions, it seems to be a part of a $C^*$-algebra folklore and follows for instance from \cite[Lem. 2.11]{DR} or \cite[Lem. 2.2]{BKR93}  that the representation $(\pi_A \times \pi_X)$ of $A\rtimes_X \Z$ is faithful if and only if $\pi_A$ is faithful and the formula 
$$
\mathcal{E}\left(\sum_{k=-n}^{n} \pi_{X_k}(a_{k}) \right)=\pi_A(a_0), 
$$
 where $a_{ k}\in X_{k},\,\, k=0, \pm 1,...,\pm n$,
defines  a mapping (conditional expectation) $\mathcal{E}$ from  the $C^*$-algebra $ C^*(\pi_A(A),\pi_X(X))$ generated by $\pi_A(A)$ and $\pi_X(X)$ onto the $C^*$-algebra $\pi_A(A)$.
Therefore  Theorem \ref{main result}   follows immediately from Lemma \ref{2.6} below, and among the technical instruments of the proof
of this latter statement  we use the following simple fact, see e.g. \cite[Lem. 12.15]{AL94}.
\begin{lemma}
\label{a}
Let $B$ be a $C^*$-subalgebra of an algebra
$\B(H)$ and  $P_1$, $P_2  \in  B^\prime$ be  two orthogonal projections such that the restrictions of $B$ to  $H_{1} = P_1 H$ and $H_{2} = P_2 H$ are both irreducible representations. Then $P_1\neq P_2$ if and only if $P_{1} \perp  P_{2}$.
\end{lemma}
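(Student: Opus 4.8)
The plan is to reduce the entire statement to an analysis of the single operator $V:=P_1P_2$. First I would record three elementary observations. Since $B^\prime$ is an algebra containing $P_1$ and $P_2$, the operator $V$ lies in $B^\prime$; since $P_1V=V=VP_2$, it annihilates $H_2^{\perp}$ and maps $H_2$ into $H_1$, so we may regard it as a bounded operator $V\colon H_2\to H_1$; and, because $P_1$ and $P_2$ are self-adjoint, one has $P_1\perp P_2$ precisely when $V=0$. Writing $\rho_i\colon B\to\B(H_i)$ for the restriction $b\mapsto b|_{H_i}$, a one-line computation using $bP_i=P_ib$ shows that $V$ intertwines $\rho_2$ and $\rho_1$. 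With these in place the ``if'' direction is immediate: an irreducible representation acts on a non-zero space, so $P_1,P_2\neq0$, and if $P_1=P_2$ then $V=P_1^{\,2}=P_1\neq0$; hence $V=0$ already forces $P_1\neq P_2$.

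For the converse I would argue contrapositively, showing that $V\neq0$ implies $P_1=P_2$. As $V$ is then a non-zero intertwiner between the irreducibles $\rho_2$ and $\rho_1$, Schur's lemma gives $\rho_1\cong\rho_2$; more usefully, $V^{*}V$ commutes with $\rho_2(B)$ inside $\B(H_2)$ and $VV^{*}$ with $\rho_1(B)$ inside $\B(H_1)$, so by irreducibility $V^{*}V\in\C\,I_{H_2}$ and $VV^{*}\in\C\,I_{H_1}$, and since these two positive operators share the same non-zero spectrum, $V^{*}V=c\,I_{H_2}$ and $VV^{*}=c\,I_{H_1}$ with $c=\|V\|^{2}>0$. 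The goal is then to see that $c=1$; granting this, $V$ is a unitary of $H_2$ onto $H_1$, and, reading off $Vh=P_1P_2h=P_1h$ for $h\in H_2$, we get $\|P_1h\|=\|h\|$, hence $h\in H_1$ by the Pythagorean identity, i.e.\ $H_2\subseteq H_1$. By the symmetric argument applied to $V^{*}=P_2P_1$, with the roles of $P_1$ and $P_2$ exchanged, $H_1\subseteq H_2$, so $H_1=H_2$ and $P_1=P_2$, contradicting the hypothesis. Hence $V=0$, that is, $P_1\perp P_2$.

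I expect the step establishing $c=1$ --- equivalently, that $\|P_1P_2\|=1$ whenever $P_1P_2\neq0$ --- to be the real heart of the argument, since it is exactly what forces two distinct projections with irreducible restrictions to coincide; this is where the hypothesis that \emph{both} restrictions are irreducible must be used in full strength, via the minimality of $P_1$ and of $P_2$ as projections of the von Neumann algebra $B^{\prime}$. The remaining ingredients --- the equivalence $P_1\perp P_2\Leftrightarrow P_1P_2=0$, the intertwining property of $P_1P_2$, and the invocation of Schur's lemma --- are routine manipulations with commuting projections, and a streamlined form of the whole argument is the one recorded in \cite[Lem. 12.15]{AL94}.
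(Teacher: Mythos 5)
Your reductions are sound as far as they go: the ``if'' direction, the observation that $V=P_1P_2$ is an intertwiner from $\rho_2$ to $\rho_1$, and the Schur-lemma conclusion $V^*V=c\,I_{H_2}$, $VV^*=c\,I_{H_1}$ are all correct. But the step you defer --- that $c=1$ whenever $V\neq 0$ --- is a genuine gap, and it cannot be filled by the mechanism you propose: minimality of $P_1$ and $P_2$ as projections of the von Neumann algebra $B'$ does \emph{not} force $\|P_1P_2\|\in\{0,1\}$. Take $H=\C^2$, $B=\C\,I$, and let $P_1$, $P_2$ be the rank-one projections onto the lines spanned by $e_1$ and $(e_1+e_2)/\sqrt2$. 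Then $B'=\B(\C^2)$, both $P_1$ and $P_2$ are minimal projections of $B'$, both restrictions of $B$ are one-dimensional and hence irreducible, yet $P_1\neq P_2$ and $\|P_1P_2\|=1/\sqrt2$, so $c=1/2$. In other words, Lemma \ref{a} is false as literally stated, and no argument will establish it; what breaks down is precisely the step you flagged as the ``heart'' of the proof.

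What your argument \emph{does} prove, once you stop at the Schur step, is the corrected statement: if the two irreducible restrictions $\rho_1$ and $\rho_2$ are not unitarily equivalent, then $P_1\perp P_2$ (a nonzero $V=P_1P_2$ would be a nonzero intertwiner, which Schur's lemma forbids for inequivalent irreducibles). This is exactly the form in which the lemma is used in the proof of Lemma \ref{2.6}, where it is explicitly recorded that $\pi$ and $\pi_2$ are ``different (actually even not equivalent)''; the inequivalence, not the mere distinctness of the projections, is the operative hypothesis. Since the paper gives no proof of Lemma \ref{a} beyond the citation of \cite[Lem.~12.15]{AL94}, there is no argument to compare yours against, but the honest conclusion is: restate the lemma with ``$[\rho_1]\neq[\rho_2]$'' in place of ``$P_1\neq P_2$'', after which the first half of your proposal is already a complete proof and the $c=1$ step becomes unnecessary.
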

\begin{lemma}
\label{2.6} Let the Rieffel homeomorphism $\h$ be 
 topologically
free.  Assume that $A$ and $X$ are embedded in $\B(H)$ so that the module actions and inner products become inherited from $\B(H)$ (then the whole $\Z$-bundle $\{X_n\}_{n\in \Z}$ embeds into  $\B(H)$)
and  let $b$ be an  operator   of the form
\begin{equation}
\label{e2.1a}
b= \sum_{k=-n}^{n} a_{k} \qquad \textrm{ where  }\,\,\, a_{ k}\in X_k,\,\, k=0, \pm 1,...,\pm  n.
\end{equation}
Then for every $\varepsilon >0$ there exists an irreducible representation
$\pi:A\to \B(H_{\pi })$ such that for any 
 representation $\nu: C^* (A,X)\to \B(H_\nu)$
 that  extends $\pi$  ($H_{\pi}\subset  H_\nu$) we have
\begin{itemize}
\item[(i)]\ \ $\Vert \pi (a_0)\Vert \ge \Vert a_0 \Vert  - \varepsilon$,
\item[(ii)]\ \ $  P_{\pi } \,   \pi (a_0)\, P_{\pi } = P_{\pi } \, \nu (b)\,  P_{\pi }  $,
\end{itemize}
where $P_{\pi }\in \B(H_\nu)$ is the orthogonal projection onto $H_{\pi }$.
\end{lemma}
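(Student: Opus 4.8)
The plan is to choose the irreducible representation $\pi$ so that compressing $\nu(b)$ to $H_\pi$ kills every term $a_k$ with $k\neq 0$. First I would fix the relevant subspaces. Given any covariant $\nu\colon C^*(A,X)\to\B(H_\nu)$ extending $\pi$ — so that $H_\pi\subseteq H_\nu$ is $\nu(A)$-reducing with $\nu(a)|_{H_\pi}=\pi(a)$ for $a\in A$ — put $H_\pi^{(m)}:=\overline{\nu(X^{\otimes m})H_\pi}$ for $m\geq 1$ and let $P_\pi$, $P_m$ be the orthogonal projections onto $H_\pi$, $H_\pi^{(m)}$. Since $(\nu|_A,\nu|_X)$ is covariant, Lemma \ref{representation structure lemma2} tells us that $H_\pi^{(m)}$ is again $\nu(A)$-reducing and is either $\{0\}$ — precisely when $[\pi]$ lies outside the domain $\widehat{D_m}$ of $\h^m$ — or carries a representation of $A$ of class $\h^m([\pi])$. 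In particular $P_\pi,P_m\in\nu(A)'$, and $\nu(A)$ restricts irreducibly to each of $H_\pi$ and (when nonzero) $H_\pi^{(m)}$.

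Next I would compute the compressions. Covariance gives $P_\pi\nu(a_0)P_\pi=P_\pi\pi(a_0)P_\pi$. For $k=m>0$ one has $a_m\in X^{\otimes m}$, so $\nu(a_m)H_\pi\subseteq H_\pi^{(m)}$ and hence $P_\pi\nu(a_m)P_\pi=P_\pi P_m\nu(a_m)P_\pi$; for $k=-m<0$ the $C^*$-algebraic bundle involution carries $a_{-m}\in X_{-m}$ into $X_m=X^{\otimes m}$, so $\nu(a_{-m})^*H_\pi\subseteq H_\pi^{(m)}$ and $P_\pi\nu(a_{-m})P_\pi=(P_\pi P_m\nu(a_{-m})^*P_\pi)^*$. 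Thus (ii) will follow as soon as $P_\pi P_m=0$ for $m=1,\dots,n$. This is where Lemma \ref{a} comes in, applied with $B=\nu(A)$ and the projections $P_\pi,P_m\in B'$: if $H_\pi^{(m)}=\{0\}$ there is nothing to prove, and otherwise, as soon as $[\pi]\neq\h^m([\pi])$ — which in particular covers the case $[\pi]\notin\widehat{D_m}$ — the two irreducible restrictions of $\nu(A)$ cannot be realised on one and the same subspace, so $P_\pi\neq P_m$ and Lemma \ref{a} forces $P_\pi\perp P_m$. The conclusion is that (ii) holds \emph{for every} covariant extension $\nu$ of $\pi$, provided only that $[\pi]\notin F_m$ for all $m=1,\dots,n$, where $F_m=\{x:\h^m(x)=x\}$ as in Definition \ref{top-free}.

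Finally I would choose $\pi$. The set $U:=\{[\rho]\in\widehat A:\|\rho(a_0)\|>\|a_0\|-\varepsilon\}$ is open, by lower semicontinuity of $[\rho]\mapsto\|\rho(a_0)\|$ on $\widehat A$, and it is nonempty, since $\sup_{[\rho]\in\widehat A}\|\rho(a_0)\|=\|a_0\|$; any $[\pi]\in U$ fulfils (i). Were it impossible to pick $[\pi]\in U$ lying outside every $F_m$ with $m\leq n$, then $U$ would be covered by $F_1,\dots,F_n$; but $\h^m([\rho])=[\rho]$ implies $\h^{jm}([\rho])=[\rho]$ for all $j\geq 1$, so $\bigcup_{m=1}^n F_m\subseteq F_N$ with $N:=\mathrm{lcm}(1,\dots,n)$, whence $U\subseteq F_N$. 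Since $N>0$, topological freeness of $\h$ makes $F_N$ have empty interior, contradicting that $U$ is nonempty and open. Hence such a $\pi$ exists: it is irreducible, it meets (i) because $[\pi]\in U$, and it meets (ii) by the previous paragraph.

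The heart of the argument is the second paragraph — converting (ii), which quantifies over all covariant extensions $\nu$, into the single condition $[\pi]\notin\bigcup_{m\leq n}F_m$ on the chosen representation of $A$; this is where Lemmas \ref{representation structure lemma2} and \ref{a} do the work, and where one must be careful that the relevant inequality of representations is taken in $\widehat A$ and not merely in $\Prim A$ (the latter being why topological freeness of $\h$, rather than of the Rieffel homeomorphism $h$, suffices here). After that, selecting $\pi$ is immediate once one notices that dodging $F_1,\dots,F_n$ simultaneously is the same as dodging the single set $F_N$.
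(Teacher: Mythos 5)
Your proof is correct and follows essentially the same route as the paper's: choose $[\pi]$ in the open set $U$ while avoiding the periodic-point sets $F_1,\dots,F_n$ (the paper bundles these into $F_{n!}$ where you use $F_{\mathrm{lcm}(1,\dots,n)}$ — the same device), then combine Lemma \ref{representation structure lemma2} with Lemma \ref{a} to force $P_\pi\perp P_m$. The only real variation is that you dispose of the negative fibers by passing to adjoints, $a_{-m}^*\in X_m$, whereas the paper treats each $k\in\Z$ directly via $H_2=\nu(X_k)H_\pi$ and handles the case $[\pi]\notin\widehat{D}_k\cap\widehat{D}_{-k}$ by a separate Hewitt--Cohen factorization; your observation that $H_\pi^{(m)}=\{0\}$ in that situation covers the same ground.
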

\begin{proof} Let $\varepsilon >0$.  Since  for every $a\in A$ the function $[\pi] \to \|\pi(a)\|$ is lower semicontinuous on $\widehat{A}$ and attains its upper bound equal to $\Vert a \Vert$, cf. for instance \cite[App. A]{morita}, there exists an open set
$U \subset \widehat{A}$ such that
$$
\|\pi(a_0)\| >  \Vert a_0 \Vert - \varepsilon \ \ {\rm for\ \  every}\ \ [\pi]\in U.
$$
 By topological freeness  of $\h$ the set $F_{n!}=\{ [\pi] \in \widehat{D}_{n!}: \h^{n!}([\pi])=[\pi]\}$ has empty interior and thus we may find
$[\pi] \in U $ such that all the points $\h^{k}([\pi] )$,  $k=0,1,...,n$ are   distinct (if they are
defined, i.e. if $\pi(D_{k})\neq 0$).
Let  $\nu$ be any extension of  $\pi$ up to a representation of   $C^* (A,X)$ and denote by $H_{\pi}$  and $H_\nu$  the corresponding representation spaces 
for $\pi$ and $\nu$:
$
H_\pi\subset H_\nu.
$
Item (i) follows from the choice of $\pi$. 
To prove (ii) we need to show  that for 
  the orthogonal projection
$P_\pi:H_\nu\to H_\pi$ and any  element  $a_{k}\in X_{k}$,
$k\neq 0$,  of the sum \eqref{e2.1a} we have
$$
P_{\pi}\,  \nu(a_{k}) \, P_{\pi} =0.
$$
We fix $k\neq 0$ and consider  two  different possible positions of $\pi$.
\par
If $\pi \notin \widehat{D}_{k}\cap \widehat{D}_{-k}$, then  either  $\pi(D_k)=0$ or $\pi(D_{-k})=0$. By Hewitt-Cohen Theorem  (see, for example, \cite[Prop. 2.31]{morita}) operator $a_k$ may be presented in a form $a_k=d_{-}ad_{+}$ where $d_{\pm}\in D_{\pm k}$, $a\in X_k$, and thus 
$$
P_{\pi}\,  \nu(a_{k}) \, P_{\pi} =P_{\pi}\,  \nu(d_{-}a d_+) \, P_{\pi}  =  P_{\pi}\, \pi(d_-) P_{\pi}\nu(a) P_{\pi}\pi(d_+) \, P_{\pi}=0.
$$
\par
Suppose then that $\pi \in \widehat{D}_{k} \cap \widehat{D}_{-k}$. Accordingly, $\pi$ may be treated as an irreducible representation for both $D_k$ and $D_{-k}$. 
We will use Lemma \ref{a} where the role of $P_1$ is played by $P_{\pi}$ and $P_2$ is the orthogonal projection onto the space
$$
H_2:=\nu(X_k)H_\pi.
$$
Clearly,  $P_\pi \in \nu(A)'$ and to see that  $P_2\in \nu(A)'$ it suffices to note that for $a\in A$, $x\in X_k$ and $h\in H_\pi$ we have 
$
\nu(a) \nu(x)h= \nu(ax)h \in H_2
$, that is $\nu(a)P_2=P_2 \nu(a) P_2$, since then  using the same relation for $\nu(a^*)$ one gets $\nu(a)P_\pi=P_\pi\nu(a)$. Moreover, by Lemma \ref{representation structure lemma2} for the representation $\pi_2:A\to L(H_2)$ given by $\pi_2(a)=\nu(a)|_{H_2}$, we have $\pi_2\cong X_{k}\dashind(\pi)$, or equivalently
$$
[\pi_2]={\h}^k([\pi]).
$$
Consequently,  $\pi$ and $\pi_2$ may be treated as irreducible representations of $D_{-k}$, and by the choice of $\pi$ these representations are different (actually even not equivalent). Hence, by Lemma \ref{a}
$$
P_\pi\cdot P_2 =0
$$
from which we have
$$
P_{\pi}\,  \nu(a_{k}) \, P_{\pi}=P_{\pi}\cdot P_2\, \nu(a_{k}) \, P_{\pi}=0.
$$
\end{proof}

\bibliographystyle{plain}


\end{document}